\def\R{{\mathbb R}}
\def\<{\langle}
\def\>{\rangle}
\def\X{{\mathrm X}}
\def\XB{{\mathrm{\mathbf{X}}}}
\def\Y{{\mathrm Y}}
\def\XXX{\widetilde{{\mathrm{\mathbf{X}}}}}
\def\x{x}
\def\H{\mathcal{H}}
\newtheorem{th-def}{Theorem-Definition}[section]
\newtheorem{theo}{Theorem}[section]
\newtheorem{prop}[theo]{Proposition}
\title{A test for normality and independence based on characteristic function }
\author[Wiktor Ejsmont]{Wiktor Ejsmont}
\address[Wiktor Ejsmont]
{
Mathematical Institute, University of Wroc\l aw \\
pl. Grunwaldzki 2/4, 50-384 Wroc\l aw, Poland
}
\email{wiktor.ejsmont@math.uni.wroc.pl}
\author[Bojana Milo\v{s}evi\'{c}]{Bojana Milo\v{s}evi\'{c}}
\address[Bojana Milo\v{s}evi\'{c}]
{
Faculty of Mathematics, University of Belgrade, Studentski trg 16, Belgrade
% \\
%pl. Grunwaldzki 2/4, 50-384 Wroc\l aw, Poland
}
\email{bojana@matf.bg.ac.rs}
\author[Marko Obradovi\'{c}]{Marko Obradovi\'{c}}
\address[Marko Obradovi\'{c}]
{
Faculty of Mathematics, University of Belgrade, Studentski trg 16, Belgrade
% \\
%pl. Grunwaldzki 2/4, 50-384 Wroc\l aw, Poland
}
\email{marcone@matf.bg.ac.rs}
\subjclass[2010]{Primary: 62H15 Secondary: 62E10.}
\keywords{characterization, multivariate normal distribution, goodness-of-fit test, empirical characteristic function}
\begin{document}

\begin{abstract}
In this article we prove a generalization of the Ejsmont characterization \cite{Ejs} of the multivariate normal distribution. Based on it, we propose a new test for independence and normality. The test uses an integral of the squared modulus of the
difference between the product of  empirical  characteristic functions and some constant. Special attention is given to  the case of testing univariate normality in which we derive the test statistic explicitly in terms of Bessel function, and the case of testing bivariate normality and independence. The tests show quality performance in comparison to some popular powerful competitors.
\end{abstract}
\maketitle

\section{Introduction}
One of classical and important problems in statistics is testing independence between two
of more components of a random vector. The traditional approach is based on Pearson's
correlation coefficient, but its lack of robustness to outliers and departures from normality
eventually led researchers to consider alternative nonparametric procedures. To overcome
this problem, some rank tests of independence are proposed, such as Savage, Spearman and van der Waerden, that in particular rely on linear rank statistics. The present paper uses
another way to test the independence and normality that is based on a distance between the empirical function and a constant  $e^{-\frac{1}{2}}$. 

Many statistical studies deal with the relationship between two random vectors, say   $(\X_1,\dots, \X_m)$ and $(\Y_{1},\dots,\Y_n)$,  and in particular, with the question whether random variables $\X_i$ and $\Y_j$  are independent and have the same normal distribution (see \cite{Kallenberg}). \\ Assuming multivariate normality of $$(\X_1,\dots, \X_m,\Y_{1},\dots,\Y_n)$$ the problem reduces to testing the null hypothesis that the correlation coefficients are equal to $0$. 
Indeed, it is well known from the general theory of probability that
 %joint normality  and zero correlation implies independence. 
if a random vector has a multivariate normal distribution (joint normality), then any two or more of its components that are uncorrelated, are independent. This implies that any two or more of its components that are pairwise independent, are independent.  

A theoretical framework to study this aspect in a general sense was given by Ejsmont \cite{Ejs}. Ejsmont proved that the characterizations
of a normal law are given by a certain invariance of the noncentral chi-square distribution.
Namely in  \cite{Ejs}  it has been shown that if the random vectors 
$(\X_1,\dots, \X_m)$ and $(\Y_{1},\dots,\Y_n)$ are independent with all moments, and the distribution
of $\sum_{i=1}^m\X_ia_i+A+\sum_{j=1}^n\Y_jb_j+B$ depends only on $\sum_{i=1}^ma_i^2+\sum_{j=1}^nb_j^2$, then  $\X_1,\dots, \X_m,\Y_{1},\dots,\Y_n$  are independent and have the same normal distribution. 
In the above result we especially remove the requirement that corresponding random variables are independent and have the same distribution. 

 The paper is organized as follows. In Section 2 we state and prove the main results of  \cite{Ejs} under the weakened assumption. 
Next, in Section 3 using this result we propose a new test for normality. Finally, in section 4 we obtain the explicit representation for two random vectors in the language of Bessel functions of the first kind, and in this case we simulate critical values.

\section{The theoretical base for the construction of a test}
\textit{Notation.}  The scalar product of vectors $t,s\in\R^p$ is denoted by $\langle t, s \rangle$ and the Euclidean norm of $t$  is $\|t\|=\sqrt{\langle t, t \rangle}$. Throughout this paper $\X:=(\X_1,\dots, \X_m) \in \R^m$ and $\Y:=(\Y_{1},\dots,\Y_n)\in \R^n$ are random vectors, where $m$
and $n$ are positive integers. The characteristic functions of $\X$ and $\Y$ are denoted by
$\varphi_\X(\cdot)=Ee^{i\langle \cdot,\X \rangle }$ and $\varphi_\Y(\cdot) =Ee^{i\langle \cdot,\Y \rangle }$, respectively. For
complex-valued functions $f(\cdot)$, the complex conjugate of $f$ is denoted by $\overline{f}$ and
$|f|^2 = f \overline{f}$. 
 In order to simplify notation, we will denote  $[n]=\{1,\dots,n\}$.
 We  denote  $(a,b) \in \R^{m+n}$ the  concatenation  of  the vectors  $a\in \R^m$ and  $b\in \R^n.$

Our construction of a new test of normality is based on the  following result.   
%characterization of normal distribution
%in terms of independent random vectors; see \cite{Ejs}, where we assumed that random  variables have all moments. 
This is a generalization of the main result of \cite{Ejs}, under omitted moment assumptions (in \cite{Ejs} we assume that random  variables have all moments; our proof is also  different). 
\begin{theo}
Let $(\X_1,\dots, \X_m,A) \textrm{ and } (\Y_{1},\dots,\Y_n,B)$ be independent random vectors, where  $\X_i$ and $\Y_j $  are 
nondegenerate for $i\in[m],j\in[n]$, and let statistic $$\langle a,\X\rangle +\langle b,\Y\rangle+A+B=\sum_{i=1}^ma_i\X_i+\sum_{j=1}^n b_j\Y_j+A+B,$$
have a distribution which depends only on $\|a\|^2+\|b\|^2$, where $a\in \mathbb{R}^m$ and $b\in \mathbb{R}^n$.  
Then random variables $\X_1,\dots,\X_m,\Y_1,\dots,\Y_n$ are independent and have the same normal distribution with zero means. % and $cov(\X_i,A)=cov(\Y_j,B)=0$ for $i\in [n]$, $j\in [m]$. 
\label{tw:Main1}
\end{theo}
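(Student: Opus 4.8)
The natural approach is to translate the distributional hypothesis into a functional equation for characteristic functions. Let $\varphi_{\X}$, $\varphi_{\Y}$, $\varphi_A$, $\varphi_B$ denote the characteristic functions of the corresponding (independent) vectors and variables. Evaluating the characteristic function of $\langle a,\X\rangle+\langle b,\Y\rangle+A+B$ at a real argument $t$ and using independence, the hypothesis that the law depends only on $\rho:=\|a\|^2+\|b\|^2$ reads
\[
\varphi_{\X}(ta)\,\varphi_{\Y}(tb)\,\varphi_A(t)\,\varphi_B(t) \;=\; \psi(t,\|a\|^2+\|b\|^2)
\]
for some function $\psi$, for all $t\in\R$ and all $a\in\R^m$, $b\in\R^n$. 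First I would extract information by specializing: taking $a=b=0$ gives $\varphi_A(t)\varphi_B(t)=\psi(t,0)$, which is nonvanishing in a neighborhood of $0$; dividing, one gets that $\varphi_{\X}(ta)\varphi_{\Y}(tb)$ depends only on $t$ and $\|a\|^2+\|b\|^2$. Setting $t=1$ and writing $f(a,b):=\varphi_{\X}(a)\varphi_{\Y}(b)$, the key relation becomes: $f(a,b)$ depends only on $\|a\|^2+\|b\|^2$ near the origin. Taking $b=0$ shows $\varphi_{\X}(a)=g(\|a\|^2)$ is radial, and likewise $\varphi_{\Y}(b)=h(\|b\|^2)$; then $g(\|a\|^2)h(\|b\|^2)$ is a function of $\|a\|^2+\|b\|^2$ alone. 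Writing $u=\|a\|^2\ge0$, $v=\|b\|^2\ge0$ and $g(u)h(v)=k(u+v)$, with $g(0)=h(0)=1$, this is (locally) the Cauchy-type multiplicative equation whose continuous solutions are $g(u)=e^{-cu}$, $h(v)=e^{-cv}$ for a common constant $c\in\C$; since $|g|\le1$ we get $\mathrm{Re}\,c\ge0$, and nondegeneracy of the $\X_i$ forces $\mathrm{Re}\,c>0$.

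This already shows $\varphi_{\X}(a)=e^{-c\|a\|^2}$ on a neighborhood of $0$, hence on all of $\R^m$ by analyticity of characteristic functions with a neighborhood of determinacy — but more directly, once the characteristic function agrees with a Gaussian one near $0$, standard results (e.g. the Marcinkiewicz/Cramér circle of ideas, or simply the fact that $e^{-c\|a\|^2}$ is the characteristic function of a Gaussian and two characteristic functions agreeing on a neighborhood of $0$ and being characteristic functions forces the distributions to coincide when one is Gaussian, via analytic continuation of the moment-generating function) identify $\X$ as centered Gaussian with covariance $2c\,I_m$, in particular with \emph{independent} coordinates all sharing the same $N(0,2c)$ law. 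The same argument handles $\Y$. Finally the independence of $\X$ from $\Y$ follows because $f(a,b)=\varphi_{\X}(a)\varphi_{\Y}(b)$ was assumed to factor this way from the start (the vectors $(\X,A)$ and $(\Y,B)$ were independent), so nothing more is needed there; what the theorem adds over trivialities is precisely the within-vector independence and the common normality, which came out of the functional equation.

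The main obstacle is getting the solution of the functional equation without assuming moments or differentiability: one has only continuity of characteristic functions and the relation $g(u)h(v)=k(u+v)$ on a neighborhood of $(0,0)$ in the first quadrant, and one must rule out pathological solutions and pin down that the constant $c$ is the \emph{same} for $g$ and $h$ (this is forced by setting $u=0$ or $v=0$: $h(v)=k(v)/g(0)=k(v)$ and $g(u)=k(u)$, so in fact $g\equiv h\equiv k$). The reduction to a genuine Cauchy equation $k(u+v)=k(u)k(v)$ then needs care about the domain (it holds only locally), but a standard extension argument propagates it, and local boundedness of $k$ near $0$ (since $|k|\le 1$) upgrades the measurable/continuous solution to the exponential form. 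A secondary point requiring attention is justifying that a nondegenerate $\X_i$ cannot have $c=0$ in its marginal: if $c=0$ then $\varphi_{\X_i}\equiv1$ near $0$, contradicting nondegeneracy. Once these are in place, the passage from "Gaussian characteristic function near $0$" to "Gaussian distribution" is routine via analyticity.
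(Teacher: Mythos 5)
Your opening step is where the argument breaks. The hypothesis is only that the vector $(\X_1,\dots,\X_m,A)$ is independent of $(\Y_1,\dots,\Y_n,B)$; nothing is assumed about independence of $A$ from $\X$, nor of $B$ from $\Y$. Consequently the characteristic function of $\langle a,\X\rangle+\langle b,\Y\rangle+A+B$ factors only as $E e^{it(\langle a,\X\rangle+A)}\cdot E e^{it(\langle b,\Y\rangle+B)}$, not as $\varphi_{\X}(ta)\,\varphi_{\Y}(tb)\,\varphi_A(t)\,\varphi_B(t)$. Your subsequent division by $\varphi_A(t)\varphi_B(t)$ therefore does not isolate $\varphi_{\X}(ta)\varphi_{\Y}(tb)$, and the key relation ``$\varphi_{\X}(a)\varphi_{\Y}(b)$ depends only on $\|a\|^2+\|b\|^2$'' is never established. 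This is not a technicality: allowing $A$ to depend arbitrarily on $\X$ (and $B$ on $\Y$) is exactly the point of having $A,B$ in the statement; under your extra independence assumption the theorem is a much weaker one. (A smaller slip: the division is justified only for $t$ in a neighbourhood of $0$ where $\varphi_A\varphi_B\neq 0$, so you cannot simply ``set $t=1$''; this is repairable by fixing a small $t_0\neq 0$ and rescaling $a,b$, which also makes the relation global, so the later analytic-continuation/moment-determinacy discussion becomes unnecessary.)

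The paper eliminates $A+B$ by a different device, which is the missing idea here: write $(a,b)=r(\tilde a,\tilde b)$ with $\|\tilde a\|^2+\|\tilde b\|^2=1$ and $r=\sqrt{\|a\|^2+\|b\|^2}$; the hypothesis says the law of $\bigl(\langle a,\X\rangle+\langle b,\Y\rangle+A+B\bigr)/r=\langle\tilde a,\X\rangle+\langle\tilde b,\Y\rangle+(A+B)/r$ does not depend on the direction $(\tilde a,\tilde b)$, and letting $r\to\infty$ makes $(A+B)/r$ vanish in the limit of the characteristic functions, so the law of $\langle a,\X\rangle+\langle b,\Y\rangle$ itself depends only on $\|a\|^2+\|b\|^2$ --- with no independence between $A$ and $\X$ ever used. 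From that point on, your analysis (radiality of $\varphi_{\X}$ and $\varphi_{\Y}$, the equation $g(u)h(v)=k(u+v)$ with $g=h=k$ obtained by setting $u=0$ or $v=0$, and the exponential solution via continuity and the Cauchy equation) coincides with the paper's and would complete the proof once this reduction is supplied.
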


\begin{proof} Our proof is based on the analysis of the characteristic function and so we denote by $\varphi_{W}(\cdot)$ the characteristic function
of $W$. 
We write $(a,b)=r(\tilde{a},\tilde{b})$ where $(\tilde{a},\tilde{b})$ belongs to
the unit sphere of $\R^{n+m}$  i.e., $r=\sqrt{\|a\|^2+\|b\|^2}$. 
 %$r=\sqrt{\|a\|^2+\|b\|^2}$ and 
%\begin{align*}
%& \tilde{a}=a/r\in\R^m\text{, }\tilde{b}=b/r\in\R^n
%\\& \Z=\frac{\langle a,\X\rangle +\langle b,\Y\rangle+A+B}{r}
%\\ &\T=\langle \tilde{a},\X\rangle +\langle \tilde{b},\Y\rangle.
%\end{align*}
Thus for 
$r>0$ and $t\in \R$, we have
\begin{align}\label{rownosccharekterys}
\varphi_{\frac{\langle a,\X\rangle +\langle b,\Y\rangle+A+B}{r}}(t)=\varphi_{\langle \tilde{a},\X\rangle +\langle \tilde{b},\Y\rangle+\frac{A+B}{r}}(t). 
\end{align} 
By the hypothesis the left hand side of \eqref{rownosccharekterys} does not depend on $(\tilde{a},\tilde{b})$ (= depend on $r$), and thus the limit on the
right hand side
%Since the hypothesis implies that the distribution of $\Z$ does not depend on $(\tilde{a},\tilde{b})$,
then
% which by hypothesis the distribution  does not depend on $\|\tilde{a}\|^2+\|\tilde{b}\|^2$, and for $t\in\R$,  we have
$$\varphi_{\langle \tilde{a},\X\rangle +\langle \tilde{b},\Y\rangle+\frac{A+B}{r}}(t)\xrightarrow{r\to + \infty}\varphi_{\langle \tilde{a},\X\rangle +\langle \tilde{b},\Y\rangle}(t),
$$
does not depend on $(\tilde{a},\tilde{b})$.
% and so 
%\begin{align} 
%E\exp\big({i(\langle a,\X\rangle +\langle b,\Y\rangle+A+B)t/\sqrt{\|a\|^2+\|b\|^2}}\big) \xrightarrow{\|a\|^2+\|b\|^2\to + \infty}E\exp({i(\langle \tilde{a},\X\rangle +\langle \tilde{b},\Y\rangle)t}),
%\end{align}
%does not depend on $\|\tilde{a}\|^2+\|\tilde{b}\|^2$. 
In particular, we have that the distribution of a statistic
\begin{align}
&\langle {a},\X\rangle +\langle {b},\Y\rangle =(\langle \tilde{a},\X\rangle +\langle \tilde{b},\Y\rangle)\sqrt{\|a\|^2+\|b\|^2} \nonumber
\intertext{dependence on $\|{a}\|^2+\|{b}\|^2$ only. Let} &h(\|{a}\|^2+\|{b}\|^2)=Ee^{i(\langle a,\X\rangle +\langle b,\Y\rangle)}.\nonumber
 \intertext{Because of the independence of $\X $ and $\Y$,  we may write }
& h(\|{a}\|^2+\|{b}\|^2)=Ee^{i\langle a,\X \rangle } Ee^{i\langle b,\Y\rangle}=\varphi_\X(a)\varphi_\Y(b). \label{eq:niezalensoc} 
\intertext{Evaluating \eqref{eq:niezalensoc} first when $a= \mathbf{ 0}\in\R^m$ and then when $b=\mathbf{ 0}\in\R^n$, we get}\nonumber
&h(\|{b}\|^2)=\varphi_\Y(b)\text{ and }h(\|{a}\|^2)=\varphi_\X(a), 
\intertext{respectively. Substituting this into \eqref{eq:niezalensoc}, we obtain}
& h(\|{a}\|^2+\|{b}\|^2)=h(\|{a}\|^2)h(\|{b}\|^2).\nonumber
\intertext{Note that $h(\cdot)$ is continuous, hence by multiplicative Cauchy functional equation we get }
&h\left(\|{a}\|^2+\|{b}\|^2\right)=e^{c(\|{a}\|^2+\|{b}\|^2)}.\nonumber
\intertext{Substituting  $a=(a_1,0,0,\dots,0)$ and $b=\mathbf{ 0}$ in this equation, we see that it can be read as }
&Ee^{i\X_1a_1}=e^{ca_1^2}, \text{ i.e. $\X_1$ have a normal distribution, with zero mean.}\nonumber
\intertext{Dragging this line of reasoning to other random variables,  
 we see that $\X_i$ and $\Y_j$  have the same normal distribution, with zero mean.
 The independence of random variables $\X_1,\dots, \X_m$ follows from the observation that, for all  $a=(a_1,\dots,a_m)\in\R^m$}
&
\varphi_\X(a)=e^{{c\sum_{j=1}^m{a_j^2}}}=\varphi_{\X_1}(a_1) \dots \varphi_{\X_m}(a_m).\nonumber
\end{align} 
\end{proof}

The construction of a new test is based directly on the Proposition below, that follows, in a sense, from Theorem \ref{tw:Main1}, namely if $A=B=0$, then Theorem \ref{tw:Main1} can be rewritten as follows.
%{\color{red}The proof of the theorem seems correct, however the expression "have a distribution which depends only on $||a||^2+||b||^2$" is not totally clear. You mean belongs to some class of distributions that depends just on $||a||^2+||b||^2$". It is important to know that everything belongs to the same class of distribution to have specify function $h$. To clarify what I have in mind: $Ee^{i(\langle a,X\rangle+\langle b,X\rangle)}$ can be for some values of $a$ and $b$ one function that depends on $r$  and for some other choice of $a$ and $b$ other function of $r$}
\begin{prop} 
 Let 
$(\X_1,\dots, \X_m) \textrm{ and } (\Y_{1},\dots,\Y_n)$ be independent random vectors, where  $\X_i$  and  $\Y_j$ are
nondegenerate, $E(\X_i^2)=1$,  $E(\Y_j^2)=1 $ for $i\in [m]$, $j\in [n]$.  
Then the following statements are equivalent:
\begin{enumerate}
\item[i)] statistic $\langle a,\X\rangle +\langle b,\Y\rangle$ has a distribution which does not depend on $$(a_1,\dots, a_m,b_1,\dots,b_n),$$ whenever $\|a\|^2+\|b\|^2=1$;
\item[ii)]    random variables $\X_1,\dots,\X_m,\Y_1,\dots,\Y_n$  are independent and have the same normal distribution $N(0,1)$.
\end{enumerate}
\label{rem1}
\end{prop}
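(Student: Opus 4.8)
The plan is to derive Proposition \ref{rem1} directly from Theorem \ref{tw:Main1} by specializing the auxiliary random variables $A$ and $B$ to be degenerate at $0$, and then checking that the extra normalization hypotheses $E(\X_i^2)=E(\Y_j^2)=1$ pin down the constant $c$ appearing in the characteristic function. For the implication i)$\Rightarrow$ii): given i), I would first observe that the distribution of $\langle a,\X\rangle+\langle b,\Y\rangle$ on the unit sphere $\|a\|^2+\|b\|^2=1$ extends, by the scaling identity $\langle a,\X\rangle+\langle b,\Y\rangle=(\langle\tilde a,\X\rangle+\langle\tilde b,\Y\rangle)\sqrt{\|a\|^2+\|b\|^2}$, to a distribution depending only on $\|a\|^2+\|b\|^2$ for all $(a,b)$. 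Hence Theorem \ref{tw:Main1} applies with $A=B=0$ (constants are independent of everything and are legitimate, if degenerate, random vectors — or one may simply re-run the short characteristic-function argument of the proof of Theorem \ref{tw:Main1}, which in the $A=B=0$ case never used nondegeneracy of $A,B$). This yields that $\X_1,\dots,\X_m,\Y_1,\dots,\Y_n$ are independent and share a common normal law $N(0,\sigma^2)$ with zero mean. The variance is then forced: $\sigma^2=E(\X_1^2)=1$, so the common law is $N(0,1)$.

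For the converse ii)$\Rightarrow$i): if the $\X_i,\Y_j$ are i.i.d. $N(0,1)$ and independent across the two vectors, then $\langle a,\X\rangle+\langle b,\Y\rangle$ is a linear combination of independent standard normals, hence normal with mean $0$ and variance $\|a\|^2+\|b\|^2$. When $\|a\|^2+\|b\|^2=1$ this is $N(0,1)$ regardless of the particular coordinates, which is exactly statement i). This direction is a one-line computation with characteristic functions: $Ee^{it(\langle a,\X\rangle+\langle b,\Y\rangle)}=e^{-\frac12 t^2(\|a\|^2+\|b\|^2)}=e^{-t^2/2}$.

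The only genuinely delicate point is making sure the reduction to Theorem \ref{tw:Main1} is legitimate when $A$ and $B$ are taken to be the constant $0$: Theorem \ref{tw:Main1} is stated for random vectors $(\X,A)$ and $(\Y,B)$ that are independent, with $\X_i,\Y_j$ nondegenerate, and it does not require $A,B$ to be nondegenerate, so there is no obstruction — a degenerate $A=B=0$ is permitted. Alternatively, and perhaps more cleanly for the write-up, one simply notes that with $A=B=0$ the proof of Theorem \ref{tw:Main1} specializes verbatim: equation \eqref{eq:niezalensoc} holds, the multiplicative Cauchy equation gives $h(\|a\|^2+\|b\|^2)=e^{c(\|a\|^2+\|b\|^2)}$, the marginals are centered normal, independence follows from the factorization of $\varphi_\X$, and finally $E(\X_1^2)=1$ gives $c=-\tfrac12$ so that each variable is $N(0,1)$. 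I would present the forward direction via this specialization remark and dispatch the converse in a sentence, since the Proposition is essentially a corollary.
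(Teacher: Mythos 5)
Your proposal is correct and follows essentially the same route as the paper: the forward direction uses the scaling identity to see that the distribution of $\langle a,\X\rangle+\langle b,\Y\rangle$ depends only on $\|a\|^2+\|b\|^2$, then invokes Theorem \ref{tw:Main1} with $A=B=0$ (the moment conditions $E(\X_i^2)=E(\Y_j^2)=1$ fixing the variance), and the converse is the same one-line characteristic-function computation $Ee^{i\langle a,\X\rangle+i\langle b,\Y\rangle}=e^{-(\|a\|^2+\|b\|^2)/2}$. Your extra remark that degenerate $A=B=0$ is admissible in Theorem \ref{tw:Main1} is a worthwhile clarification but does not change the argument.
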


\begin{proof}
$(i)\Rightarrow (ii)$. We see that the distribution of
$$\langle a,\X\rangle +\langle b,\Y\rangle=\sqrt{\|a\|^2+\|b\|^2}\frac{\langle a,\X\rangle +\langle b,\Y\rangle}{\sqrt{\|a\|^2+\|b\|^2}}$$
depends only on $\|a\|^2+\|b\|^2$, which by Theorem  \ref{tw:Main1} implies that 
$\X_i$ and $\Y_j$  are independent and have the same normal distribution $N(0,1)$ (because we assume that $E(\X_i^2)=1$,  $E(\Y_j^2)=1$). 
\newline 
$(ii)\Rightarrow (i)$. We compute the characteristic function 
 $$Ee^{i\langle a,\X\rangle+i\langle b,\Y\rangle}=e^{-(\|a\|^2+\|b\|^2)/2},$$
 from which we see that condition $(i)$ is satisfied. 
 \newline 
%$(iii)\Rightarrow (i)$. If we assume $(iii)$, then for $t\in\R^m$ and $s\in\R^n$ $\varphi_{}$
%$$E\big(\exp(i[\langle t,\X\rangle+\langle s,\Y\rangle])\big)=E\big(\exp(i[\langle t,\X\rangle+\langle s,\Y\rangle])\big)$$
\end{proof}

\section{The test statistic } 
In this section we propose a  new class of test  statistics for testing the null hypothesis that the sample comes from a multivariate normal distribution with independent components. In the univarate case it reduces to the null normality hypothesis.

Our methodology applied in this construction is based on distances between empirical and theoretical quantities.  
There are many types of distances in theory of hypothesis testing that can be defined between statistical objects. One of the best known and mostly applied is the $L_2$ distance. If $F$ is the cumulative distribution function (cdf) of a random variable and $F_n$ is the empirical function (edf), then their $L_2$ distance can be expressed as $\int_{-\infty}^{\infty}(F_n(x)-F(x))^2dx$,  introduced by Cram\'er \cite{Cramer}. 
Later modifications of this distance lead to Cram\'er-von Mises test and to Kolmogorov-Smirnov test \cite{Smirnov}. 
There is, however, another important distance, if the sample comes from a $d$-dimensional space, where $d\geq 1$.
 If we want to test multivariate normality then we can use the distance between empirical and theoretical characteristic function; see \cite{Bar,Epps}.
More recently, the characterization of a test for multivariate independence was given in \cite{Szekly,Szekly2}. Suppose that $\X\in\R^m,\Y\in\R^n$ are real-valued random vectors with characteristic functions  ${\varphi_X}$ and ${\varphi_\Y}$,
respectively. Then, for measuring independence, we can use the following distance 
$\int_{\R^{m+n}}|{\varphi_{\X,\Y}}(t,s)-{\varphi_\X}(t){\varphi_\Y}(s)|^2w(t,s)dtds$, where $w(t, s)$ is an arbitrary positive weight function for which the integral above exists. We put forward a test that is also  based  on the distance between the characteristic function and some constant, and it was inspired by the articles \cite{Bar,Szekly,Szekly2,Epps}.
 Our approach is based on the following reasoning.
\newline
\newline
Condition $(i)$ from Proposition \ref{rem1} simply tells us that we get statement $(ii)$ if the distribution of the statistic 
$\langle a,\X\rangle+\langle b,\Y\rangle$ is constant on the  $(n+m)$-sphere with radius $1$.
This requirement
can be rewritten using the characteristic function, namely,  we get statement $(ii)$ if and only if the  function 
\begin{align}
 \nonumber & Ee^{i\langle a,\X\rangle+i\langle b,\Y\rangle} =\varphi_\X(a)\varphi_\Y(b) %\qquad (=E\big(\exp(i\langle a,\X\rangle)+\exp(i\langle b,\Y\rangle)\big))
\intertext{is constant on the unit sphere $\|a\|^2+\|b\|^2=1$,  where $a\in \mathbb{R}^m$ and $b\in \mathbb{R}^n$.  
 From the proof of Proposition \ref{rem1} we also know that this constant function must equal $e^{-\frac{1}{2}}$,  namely}
 &\varphi_\X(a)\varphi_\Y(b)-e^{-\frac{1}{2}}=0 \nonumber
 \intertext{ for all $\|a\|^2+\|b\|^2=1$ or equivalently,  }
 &\int_{S_{n+m}}|\varphi_\X(a)\varphi_\Y(b)-e^{-\frac{1}{2}}|^2dS_{n+m}=0,
\label{pstaccalkowa}
 \intertext{where $\int_{S_{n+m}} \cdot dS_{n+m}$ is the surface integral over   $S_{n+m}=\{t\in\R^{n+m}\mid \|t\|=1\}$.
Finiteness of the integral above follows directly from $|\varphi_\X(a)\varphi_\Y(b)|\leq 1$ and $e^{-\frac{1}{2}}<1$, namely   we see that }
&  \int_{S_{n+m}}|\varphi_\X(a)\varphi_\Y(b)-e^{-\frac{1}{2}}|^2dS_{n+m}
\leq (1-e^{-\frac{1}{2}})^2|S_{n+m}|. \nonumber
\end{align}

Let us assume that we have a simple random sample $\XB=(\boldsymbol{X}_1,\ldots, \boldsymbol{X}_N)$ from a multivariate distribution with $m$ components, i.e. the data have the following structure: 
    \begin{equation*}
   \XB= 
      \begin{bmatrix}
        \x_{1,1} & \x_{1,2} & \dots  &  \x_{1,m}  \\ 
        x_{2,1} &  \x_{2,2} & \dots & \x_{2,m} 
        \\
        \multicolumn{4}{c}{\dotfill}\\
        x_{N,1} &  \x_{N,2} & \dots &   \x_{N,m} 
      \end{bmatrix}.
    \end{equation*}

We want to test the null hypothesis
\newline 
\begin{center}
\begin{tabular}{ c c c }
 $\H^{(m)}_0$: & & $\H^{(m)}_1$:\\
 all columns of  $\XB$  are independent,    & \qquad vs.\qquad \quad&  $\H^{(m)}_0$  is not true. \\ 
 and have a normal distribution &  &  \\  
\end{tabular}
\end{center}

  Let $\XXX$ denote the matrix obtained from $\XB$ by columnwise standardization.
Let  
$\varphi_{\XXX}(a)$  be the empirical  characteristic functions of  $\XXX$ defined by 
$$\varphi_{\XXX}(a)=\frac{1}{N}\sum_{k=1}^N e^{i \langle a,\XXX_k\rangle},$$
 where $\XXX_k$ is the $k$th row of the matricex $\XXX$.
 Similarly, the empirical counterpart of characteristic function of random variable $\langle a,X\rangle+\langle b,Y\rangle$ is
 \begin{align*}
     \frac{1}{N^2}\sum_{j,k}e^{i(\langle a,{\boldsymbol{X}}_j\rangle +\langle b,{\boldsymbol{X}}_k\rangle )},
 \end{align*}
   where $a=(a_1,...,a_m)^{\textup{T}}$ and $b=(b_1,...,b_m)^{\textup{T}}$.
 Assuming that $X$ and $Y$ are equally distributed as $\boldsymbol{X}_1$, the natural test statistics based on \eqref{pstaccalkowa} is
% where $\XXX_1$ and $\XXX_2$ are equally distributed random variables that play roles of $X$ and $Y$ in Proposition 2.2.
 
 %For testing independence and normality in a multivariate setting we can use the following generalization of $M_{1}$. Based on an i.i.d. sample of a $m$-variate vectors $\boldsymbol{X}_1,\ldots,\boldsymbol{X}_N$ we test the null hypothesis that all $m$ components of the vectors are independent and have (univariate) normal distribution. Using the normalization $\widetilde{\boldsymbol{X}}_i=\Sigma(\boldsymbol{X}_i-\boldsymbol{\mu})$, $\boldsymbol{X}_i=(X_{i1},X_{i2},...,X_{im})^{\textup{T}}$, $\boldsymbol{\mu}=(\bar{X}_{1},\ldots,\bar{X}_{m})^{\textup{T}}$, $\Sigma=\textup{Diag}(S_1^{-1},\ldots,S_m^{-1})$, $S_j^2=\sum_{i=1}^N (X_{ij}-\bar{X}_{j})^2)$, the test statistic then is

\begin{align}\label{Mm}M_{m}&=N\int\left|\frac{1}{N^2}\sum_{j,k}e^{i(\langle a,\widetilde{\boldsymbol{X}}_j\rangle +\langle b,\widetilde{\boldsymbol{X}}_k\rangle)}-e^{-\frac{1}{2}}\right|^2dS_{2m}(a,b),
%\\&=
%N\int\left|\frac{1}{N^2}\sum_{j,k}e^{i(a_1\widetilde{X}_{j1}+a_2\widetilde{X}_{j2} +b_1\widetilde{X}_{k1}+b_2\widetilde{X}_{k2}}-e^{-\frac{1}{2}}\right|^2dS_{2m}(a,b),
\end{align}
    where $a=(a_1,...,a_m)^{\textup{T}}$ and $b=(b_1,...,b_m)^{\textup{T}}$ such that $\langle a,a\rangle+\langle b,b\rangle=1$.
 
Clearly, we are interested in one-sided test, that is the right-tailed test, because we see from the above construction that we reject null hypothesis for large values of  $M_m$. It is clear that test statistic is location-scale invariant under the null hypothesis, hence we may derive critical values for testing using Monte Carlo approach.

\section{Testing univariate normality} 

Consider now a univariate simple random sample $\boldsymbol{X}=(X_1,\ldots,X_n)$. In this case test statistic $M_1$ can be expressed in a simpler form. 

\begin{prop} Let $\boldsymbol{\tilde{X}}=(\tilde{X}_1,\ldots,\tilde{X}_n)$ be the standardized sample. %$X$ {\color{red} of $\boldsymbol{X}$ or I think the symbol $X$ can be be dropped now  } .
The  statistic  $M_1$ has the form 

     \begin{align} \label{M1} 
M_{1}&=2\pi N\Bigg[\frac{1}{N^4}\sum_{n,j,k,l=1}^N J(d(\tilde{X}_{n}-\tilde{X}_k, \tilde{X}_j-\tilde{X}_l))-e^{-\frac{1}{2}}\frac{2}{N^2}\sum_{n,j=1}^N J(d(\tilde{X}_{n},\tilde{X}_j))+e^{-1}\Bigg]. 
    \end{align}
    where $J$ is the Bessel function (of order zero) of the first kind, namely $J(z)=\sum_{k=0}^\infty(-1)^k\frac{(z^2/4)^k}{(k!)^2}$ and $d$ is the distance from origin to point  $(x,y)$ i.e., $d(x,y)=\sqrt{x^2+y^2}$.
\end{prop}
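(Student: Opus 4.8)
The plan is to expand the squared modulus in \eqref{Mm} (with $m=1$, so the sphere is the circle $S_2$), interchange the finite sums with the surface integral, and reduce everything to a single master integral over $S_2$. Writing $g(a,b)=\frac{1}{N^2}\sum_{j,k}e^{i(a\tilde{X}_j+b\tilde{X}_k)}$, we have $|g-e^{-1/2}|^2=|g|^2-e^{-1/2}(g+\overline{g})+e^{-1}$. Since $|g|^2=\frac{1}{N^4}\sum_{n,j,k,l}e^{ia(\tilde{X}_j-\tilde{X}_n)+ib(\tilde{X}_k-\tilde{X}_l)}$ and $g+\overline{g}=\frac{2}{N^2}\sum_{j,k}\cos(a\tilde{X}_j+b\tilde{X}_k)$, every summand is a scalar multiple of $e^{i(au+bv)}$ (or its real part) for real $u,v$ built from the sample. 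Hence $M_1$ is completely determined by the single quantity $I(u,v):=\int_{S_2}e^{i(au+bv)}\,dS_2(a,b)$ together with $\int_{S_2}dS_2=2\pi$.

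Next I would evaluate $I(u,v)$. Parametrising the circle by $a=\cos\theta$, $b=\sin\theta$, $\theta\in[0,2\pi)$, and writing $u\cos\theta+v\sin\theta=\rho\cos(\theta-\psi_0)$ with $\rho=\sqrt{u^2+v^2}=d(u,v)$, the substitution $\theta\mapsto\theta-\psi_0$ and periodicity give $I(u,v)=\int_0^{2\pi}e^{i\rho\cos\theta}\,d\theta$. This is $2\pi$ times the classical integral representation of the order-zero Bessel function; in particular $I(u,v)$ is real, so also $\int_{S_2}\cos(au+bv)\,dS_2=2\pi J(d(u,v))$. If one prefers not to quote the representation, it follows by expanding $e^{i\rho\cos\theta}$ in its power series and integrating termwise: $\int_0^{2\pi}\cos^m\theta\,d\theta$ vanishes for odd $m$ and equals $2\pi\binom{2k}{k}/4^{k}$ for $m=2k$, and the resulting series $\sum_{k\ge 0}(-1)^k(\rho^2/4)^k/(k!)^2$ is exactly $J(\rho)$ as defined in the statement.

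Finally I would substitute $I$ back. The $|g|^2$ term contributes $\frac{2\pi}{N^4}\sum_{n,j,k,l}J\bigl(d(\tilde{X}_j-\tilde{X}_n,\tilde{X}_k-\tilde{X}_l)\bigr)$, the cross term contributes $-e^{-1/2}\frac{4\pi}{N^2}\sum_{n,j}J\bigl(d(\tilde{X}_n,\tilde{X}_j)\bigr)$, and the constant term contributes $2\pi e^{-1}$; multiplying by the outer factor $N$ and pulling out $2\pi$ yields \eqref{M1} after relabelling summation indices. The argument is essentially bookkeeping: the one genuine computation is the master integral $I(u,v)$, and the only point requiring a little care is reading off the correct arguments of $J$ — namely matching the exponent $a(\tilde{X}_j-\tilde{X}_n)+b(\tilde{X}_k-\tilde{X}_l)$ in the quartic sum, and $a\tilde{X}_n+b\tilde{X}_j$ in the quadratic sum, to the template $au+bv$, so that the distances become $d(\tilde{X}_j-\tilde{X}_n,\tilde{X}_k-\tilde{X}_l)$ and $d(\tilde{X}_n,\tilde{X}_j)$ respectively.
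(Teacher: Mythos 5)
Your proposal is correct and follows essentially the same route as the paper: expand the squared modulus, reduce every term to the integral of $\cos(au+bv)$ (equivalently $e^{i(au+bv)}$) over the unit circle, and evaluate it via the harmonic-addition identity and the classical integral representation $2\pi J(z)=\int_0^{2\pi}\cos(z\cos\alpha)\,d\alpha$. Your complex-exponential bookkeeping and the power-series fallback are only cosmetic variants of the paper's real-part computation (which uses $\mathrm{atan2}$ and treats the $xy=0$ case separately), so there is nothing substantive to add.
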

\begin{proof}Let us calculate the integral
in the right hand side of \eqref{Mm}. 
 \begin{align*} 
M_{1}&=2\pi N\int\left|\frac{1}{N^2}\sum_{i,j}e^{i(a_1\tilde{X}_{i}+a_2\tilde{X}_{j})}-e^{-\frac{1}{2}}\right|^2dS_2(a,b) \\
&=2\pi N\int U^2(a,b) dS_2(a,b). 
    \end{align*}

Since 
\begin{align*}
    U^2(a,b)&=\Big|\frac{1}{N^2}\sum_{i,j}e^{i(a\tilde{X}_i+b\tilde{X}_j)}-e^{-\frac{1}{2}}\Big|^2\\&=\left(\frac{1}{N^2}\sum_{i,j}\cos(a\tilde{X}_i+b\tilde{X}_j))-e^{-\frac{1}{2}}\right)^2+\left(\frac{1}{N^2}\sum_{i,j}\sin(a\tilde{X}_i+b\tilde{X}_j))\right)^2\\
    &=\frac{1}{N^4}\sum_{i,j,k,l}\Bigg(\Big(\cos(a\tilde{X}_i+b\tilde{X}_j)-e^{-\frac{1}{2}}\Big)\Big(\cos(a\tilde{X}_k+b\tilde{X}_l)-e^{-\frac{1}{2}}\Big)\\&+\sin(a\tilde{X}_i+b\tilde{X}_j)\sin(a\tilde{X}_k+b\tilde{X}_l)\Bigg),
    \end{align*}
    switching to polar coordinates we obtain
    \begin{align*}
    U^2(a,b) &=\frac{1}{N^4}\sum_{i,j,k,l}\Big((\cos(a\tilde{X}_i+b\tilde{X}_j))((\cos(a\tilde{X}_k+b\tilde{X}_l)))+\sin(a\tilde{X}_i+b\tilde{X}_j)\sin(a\tilde{X}_k+b\tilde{X}_l)\Big)\\
    &-\frac{2e^{-\frac{1}{2}}}{N^2}\sum_{i,j}\cos(a\tilde{X}_i+b\tilde{X}_j)+e^{-1}
    \\&=\frac{1}{N^4}\sum_{i,j,k,l}(\cos(a(\tilde{X}_i-\tilde{X}_k)+b(\tilde{X}_j-\tilde{X}_l))-\frac{2e^{-\frac{1}{2}}}{N^2}\sum_{i,j}\cos(a\tilde{X}_i+b\tilde{X}_j)+e^{-1}\\&=\frac{1}{N^4}\sum_{i,j,k,l}(\cos(\cos\alpha(\tilde{X}_i-\tilde{X}_k)+\sin\alpha(\tilde{X}_j-\tilde{X}_l))-\frac{2e^{-\frac{1}{2}}}{N^2}\sum_{i,j}\cos(\cos\alpha \tilde{X}_i+b\sin\alpha \tilde{X}_j)+e^{-1},
\end{align*}
Since we need integration over $S_{2}$, we have
to focus on computing
the following integral
\begin{align*}
&\int_{0}^{2\pi}\cos(x\cos \alpha + y \sin \alpha )d\alpha, \qquad\text{ for } x,y\in\R. \end{align*}

By trigonometric
identities the linear combination, or harmonic addition, of sine and cosine waves is equivalent to a single cosine wave with a phase shift and scaled amplitude, namely

\begin{align*}
x\cos \alpha + y \sin \alpha =\sqrt{x^2+y^2}\cos(\alpha-\textup{atan2}(y,x)),
\end{align*}
where $\textup{atan2}(y, x)$ is the generalization of $\arctan(y/x)$ that covers the entire circular range (we don't need a formal definition of $\textup{atan2}$).
%We begin by making the elementary substitution $z=\frac{tcos(\alpha)+dsin(\alpha)}{\sqrt{t^2+d^2}}$ we get 
Thus for $x,y\in \R$ and $xy\neq 0$ we get  

\begin{align*}
\int_{0}^{2\pi}\cos(x\cos \alpha + y \sin \alpha )d\alpha &=\int_{0}^{2\pi}\cos\big(\sqrt{x^2+y^2}\cos\big(\alpha-\textup{atan2}(y,x)\big)\big)d\alpha\\&=\int_{-\textup{atan2}(y,x)}^{2\pi-\textup{atan2}(y,x)}\cos\Big(\sqrt{x^2+y^2}\cos t \Big)dt\\&=2\pi J(\sqrt{x^2+y^2}),
\end{align*}
where we used the following identity -- see \cite[page 360]{AdrSteg} 
\begin{align}
 2\pi J(z)=\int_{0}^{2\pi}e^{iz\cos \alpha }d\alpha=\int_{0}^{2\pi}\cos(z\cos \alpha )d\alpha=\int_{0}^{2\pi}\cos(z\sin \alpha )d\alpha. \label{eq:stegun}
\end{align}
If either $x=0$ or $y=0$, then the formula above is also true because we can use directly equation \eqref{eq:stegun}.

Therefore $M_1$ has the representation \eqref{M1}.
\end{proof}

In Tables \ref{tab: Univariate1} and \ref{tab: Univariate2} we present power study results for sample sizes $n=20$ and $n=50$. The results are obtained using the Monte Carlo method with $N=5000$  replicates.

    \begin{table}[!h]
		\centering
	\caption{Power comparison for testing univariate normality -- Part I}
    \label{tab: Univariate1}
		\begin{tabular}{|cc|cccccccccc|}\hline
    	
    	Alt. & n & SW & BCMR & BHEP & AD & SF & HJG$_{2.5}$ & HJG$_{5}$ & BE$_{1}^{(1)}$ & BE$_{1}^{(2)}$ & $M_{1}$\\\hline
    	\multirow{3}{*}{N(1,4)} & 20 & 5& 5& 5& 5& 5& 5& 5& 5& 5& 5\\
    	& 50 & 5& 5& 5& 5& 5& 5& 5& 5& 5& 5\\
    	& 100 & 5& 5& 5& 5& 5& 5& 5& 5& 5& 5\\\hline
    		\multirow{3}{*}{MixN(0.3,1,0.25)} & 20 & 28& 28& 27& 30& 25& 11& 13& 24& 20& 10\\
    	& 50 & 60& 60& 62& 68& 57& 16& 26& 56& 48& 20\\
    	& 100 & 89& 89& 90& 94& 88& 28& 49& 87& 78& 38\\
    		\multirow{3}{*}{MixN(0.5,1,4)} & 20 & 40& 43& 42& 46& 48& 34& 33& 36& 33& 19\\
    	& 50 & 78& 80& 80& 86& 83& 49& 49& 63& 46& 36\\
    	& 100 & 97& 98& 98& 99& 98& 69& 68& 91& 66& 57\\\hline
    		\multirow{3}{*}{$t_3$} & 20 & 34& 37& 34& 33& 40& 38& 37& 34& 34& 36\\
    	& 50 & 64& 65& 61& 60& 69& 64& 62& 54& 50& 62\\
    	& 100 & 88& 89& 86& 85& 91& 86& 84& 76& 67& 83\\
    	\multirow{3}{*}{$t_5$} & 20 & 19& 20& 18& 17& 22& 22& 22& 19& 19& 22\\
    	& 50 & 35& 37& 32& 31& 41& 40& 38& 29& 29& 36\\
    	& 100 & 56& 58& 50& 48& 63& 59& 55& 41& 37&56\\
    		\multirow{3}{*}{$t_{10}$} & 20 & 10& 11& 9& 9& 12& 12& 12& 10& 10& 12\\
    	& 50 & 16& 17& 13& 12& 20& 20& 19& 13& 14& 18\\
    	& 100 & 22& 24& 16& 15& 28& 28& 26& 16& 15&24\\\hline
    	\multirow{3}{*}{$\textup{U}(-\sqrt3,\sqrt3)$} & 20 & 21& 17& 13& 17& 8& 0& 0& 3& 2& 1\\
    	& 50 & 75& 70& 55& 58& 47& 0& 0& 7& 2& 0\\
    	& 100 & 100& 99& 95& 95& 97& 0& 0& 32& 3&3 \\\hline
    	\multirow{3}{*}{$\chi^2_5$} & 20 & 43& 44& 42& 38& 42& 33& 36& 44& 44& 42\\
    	& 51 & 88& 88& 84& 80& 85& 65& 76& 87& 86& 86\\
    	& 100 & 100& 100& 99& 99& 100& 91& 98& 99& 99& 100\\
    		\multirow{3}{*}{$\chi^2_{15}$} & 18 & 17& 18& 17& 17& 18& 16& 17& 18& 18& 20\\
    	& 50 & 43& 42& 39& 34& 40& 31& 37& 45& 46& 47\\
    	& 100 & 75& 74& 68& 61& 71& 54& 68& 78& 78&74 \\\hline
    		\multirow{3}{*}{$\textup{B}(1,4)$} & 22 & 60& 60& 53& 53& 54& 30& 35& 52& 49& 41\\
    	& 50 & 98& 98& 94& 95& 97& 57& 76& 94& 92& 90\\
    	& 100 & 100& 100& 100& 100& 100& 89& 99& 100& 100& 100\\
    		\multirow{3}{*}{$\textup{B}(2,5)$} & 20 & 16& 16& 16& 14& 14& 9& 11& 15& 15& 14\\
    	& 50 & 50& 47& 45& 39& 40& 16& 25& 44& 42& 38\\
    	& 100 & 90& 89& 80& 76& 82& 29& 54& 80& 78& 80\\\hline
    		\end{tabular}
\end{table}

    \begin{table}[h!]
		\centering
	\caption{Power comparison for testing univariate normality -- Part II}
    \label{tab: Univariate2}
		\centering
		\begin{tabular}{|cc|cccccccccc|}\hline
    	Alt. & n & SW & BCMR & BHEP & AD & SF & HJG$_{2.5}$ & HJG$_{5}$ & BE$_{1}^{(1)}$ & BE$_{1}^{(2)}$ & $M_{1}$\\\hline
    		\multirow{3}{*}{$\Gamma(1,5)$} & 20 & 83& 83& 77& 77& 80& 57& 63& 78& 76& 72\\
    	& 50 & 100& 100& 100& 100& 100& 91& 97& 100& 100& 99\\
    	& 100 & 100& 100& 100& 100& 100& 100& 100& 100& 100& 100\\
    		\multirow{3}{*}{$\Gamma(5,1)$} & 20 & 24& 24& 23& 20& 24& 20& 22& 25& 25& 25\\
    	& 50 & 60& 59& 53& 49& 58& 42& 50& 63& 62& 62\\
    	& 100 & 90& 90& 85& 81& 88& 69& 83& 91& 91& 93\\\hline
    	\multirow{3}{*}{Gum(1,2)} & 20 & 31& 32& 31& 28& 32& 28& 30& 33& 33& 34\\
    	& 51 & 68& 69& 68& 62& 68& 53& 66& 73& 72& 73\\
    	& 100 & 95& 95& 93& 88& 95& 84& 90& 94& 96& 97\\\hline
    	\multirow{3}{*}{LN(0,1)} & 20 & 93& 93& 91& 90& 91& 78& 83& 91& 90& 87\\
    	& 50 & 100 & 100& 100& 100& 100&  99& 100& 100& 100& 100\\
    	& 100 & 100 & 100& 100& 100& 100& 100& 100&  100& 100& 100\\\hline
    	\end{tabular}
\end{table}

Among the plethora of normality tests we selected to evaluate the performance of our test versus  the most popular normality tests ( the Shapiro-Wilk test (SW), see \cite{shapiro}, the Shapiro-Francia
test (SF), see \cite{shapiroF}, and the Anderson-Darling test (AD), see \cite{ad}). Those tests are implemented in  R package nortest by \cite{nort}.  Additionally, we consider recent powerful  tests based on empirical characteristic function (BHEP), see \cite{bhep},  quantile correlation test based on   the L2 Wasserstein distance, see \cite{bcmr}, the moment generating function ($HJG_{\beta}$) proposed in \cite{hjg} and test based on Stein fixed point characterization  proposed in \cite{Ebner}.

The alternatives we consider are normal mixtures $\textup{MixN}(p,\mu,\sigma^2)=(1-p)N(0,1)+pN(\mu,\sigma^2)$, Student $t_\nu$ distribution, uniform $U(a,b)$ distribution, chi-squared $\chi^2_\nu$, beta $\textup{B}(a,b)$, gamma $\Gamma(a,b)$, Gumbel $\textup{Gum}(\mu,\sigma)$ and lognormal $\textup{LN}(\mu,\sigma)$ where all parameters are standard distribution parameters. This set of alternatives was also used in \cite{Ebner}.

It can be seen form Tables \ref{tab: Univariate1} and \ref{tab: Univariate2} that the powers are reasonably high in comparison to other tests for all alternatives except for the uniform and normal mixtures. In the case of the Gumbel distribution our test outperforms the competitors, and for Gamma and Chi-squared it is one of the best.

\section{Testing bivariate normality and independence}

Consider now a bivariate simple random sample $\XB=(\boldsymbol{X}_1,\ldots, \boldsymbol{X}_N)$, where $\boldsymbol{X}_j=(X_{j1},X_{j2})$, $j=1,\ldots,N$.
Let $\widetilde{\bf{X}}=(\widetilde{\boldsymbol{X}}_1,\ldots, \widetilde{\boldsymbol{X}}_N)$ be its standardization.
Here, the test statistic \eqref{Mm} becomes

\begin{align*}M_{2}&=
N\int\left|\frac{1}{N^2}\sum_{j,k}e^{i(a_1\widetilde{X}_{j1}+a_2\widetilde{X}_{j2} +b_1\widetilde{X}_{k1}+b_2\widetilde{X}_{k2})}-e^{-\frac{1}{2}}\right|^2dS_{4}(a,b) \\&=\int\Bigg(\frac{1}{N^4}\sum_{i,j,k,l}(\cos(a_1(\tilde{X}_{i1}-\tilde{X}_{k1})+a_2(\tilde{X}_{i2}-\tilde{X}_{k2})+b_1(\tilde{X}_{j1}-\tilde{X}_{l1})+b_2(\tilde{X}_{j2}-\tilde{X}_{l2}))\\&-\frac{2e^{-\frac{1}{2}}}{N^2}\sum_{i,j}\cos(a_1\tilde{X}_{i1}+a_2\tilde{X}_{i2}+b_1\tilde{X}_{j1}+b_2\tilde{X}_{j2})+e^{-1}\Bigg)dS_{4}(a,b),
\end{align*}
    where $a=(a_1,a_2)^{\textup{T}}$ and $b=(b_1,b_2)^{\textup{T}}$ such that $\langle a,a\rangle+\langle b,b\rangle=1$.

In Table \ref{tab:bBivPow} we present powers of the new test and the test $KS2$, initially proposed in \cite{Koziol} with data driven parameter selection introduced in \cite{Kallenberg}. We have chosen this competitor since it is the only one in the literature known so far, for testing bivariate normality and independence. In \cite{Kallenberg} it is shown that it outperforms Kolmogorov-Smirnov and Hoeffding test in  most cases. The set of alternatives is taken from \cite{Kallenberg} for some choice of distribution parameters, and is given below.  Unless stated otherwise, all distributions are defined for $x_i\in\mathbb{R}$, $i=1,2$. Distributions derived from the bivariate normal inherit its parameter space $\mu_i\in\mathbb{R},\sigma_i>0$,  $i=1,2,$ $\rho\in[-1,1]$. 

\begin{itemize}
    \item a bivariate normal distribution BivNorm($\mu_1,\mu_2,\sigma_1,\sigma_2,\rho$)  with density
%    \begin{align*}
%        g_1(x_1,x_2;\mu_1,\mu_2,\sigma_1,\sigma_2,\rho)=\frac1{2\pi\sigma_1\sigma_2\sqrt{1-\rho^2}}\;\;e^{-\frac{(x_1-\mu_1)^2\sigma_2^2+(x_2-\mu_2)^2\sigma_1^2-2\rho(x_1-\mu_1)(x_2-\mu_2)\sigma_1\sigma_2}{2(1-\rho)^2\sigma_1^2\sigma_2^2}};
 %   \end{align*}
    \begin{align*}
        g_1(x_1,x_2;\mu_1,\mu_2,\sigma_1,\sigma_2,\rho)&=\frac1{2\pi\sigma_1\sigma_2\sqrt{1-\rho^2}}\;\;e^{-\frac{1}{2(1-\rho)^2}\big(\frac{(x_1-\mu_1)^2}{\sigma_1^2}+\frac{(x_2-\mu_2)^2}{\sigma_2^2}-\frac{2\rho(x_1-\mu_1)(x_2-\mu_2)}{\sigma_1\sigma_2}\big)},\\& 
    \end{align*}
     
    \item a mixture of bivariate normal distributions NMixA($\rho$) with density
    \begin{align*}
        g_2(x_1,x_2,\rho)=\frac12 g_1(x_1,x_2;0,0,1,1,\rho)+\frac12 g_1(x_1,x_2;1,1,1,1,0.9);
    \end{align*}
    
    \item a mixture of bivariate normal distributions NMixB($\rho$) with density
    \begin{align*}
        g_3(x_1,x_2,\rho)=\frac12 g_1(x_1,x_2;0,0,1,1,\rho)+\frac12 g_1(x_1,x_2;0,0,1,1,-\rho);
    \end{align*}
    \item a bivariate lognormal distributions LogN($\sigma_1,\sigma_2,\rho$) with density
  %  \begin{align*}
  %      g_4(x_1,x_2;\sigma_1,\sigma_2,\rho)=\frac1{2\pi\sigma_1\sigma_2\sqrt{1-\rho^2}x_1x_2}\;\;e^{-\frac{\log^2(x_1)\sigma_2^2+\log^2(x_2)\sigma_1^2-2\rho\log(x_1)\log(x_2)\sigma_1\sigma_2}{2(1-\rho)^2\sigma_1^2\sigma_2^2}};
  %  \end{align*}
      \begin{align*}
        g_4(x_1,x_2;\sigma_1,\sigma_2,\rho)=\frac{b_1b_2}{(b_1x_1+a_1)(b_2x_2+a_2)}g_1(l_1,l_2;0,0,\sigma_1,\sigma_2,\rho),\;\;x_i>-\frac{b_i}{a_i},
            \end{align*}
    where $l_i=\log(b_ix_i+a_i)$, $a_i=e^{\sigma_i^2/2}$, $b_i=\sqrt{e^{2\sigma_i^2}-e^{\sigma_i^2}}$, $i=1,2$.
    
    \medskip
    
    \item a Sinh$^{-1}$-normal distribution Sinh$^{-1}$N($\mu_1,\mu_2,\sigma_1,\sigma_2,\rho$) with density
    \begin{align*}
        g_5(x_1,x_2;\mu_1,\mu_2,\sigma_1,\sigma_2,\rho)&=\frac{b_1b_2(w_1+\sqrt{1+w_1^2})(w_2+\sqrt{1+w_2^2})}{(1+w_1^2+w_1\sqrt{1+w_1^2})(1+w_2^2+w_2\sqrt{1+w_2^2})}\\&\times g_1(\sinh^{-1}(w_1),\sinh^{-1}(w_2);\mu_1,\mu_2,\sigma_1,\sigma_2,\rho),
            \end{align*}
    where $w_i=b_ix_i+a_i$, $a_i=e^{\sigma_i^2/2}\sinh(\mu_i)$, $b_i=\sqrt{(e^{\sigma_i^2}-1)(e^{\sigma_i^2}\cosh(2\mu_i)+1)}$, $i=1,2$.
    
    \medskip
    
    \item a generalized Burr-Pareto-Logistic distribution GBPL($\alpha,\beta$) with standard normal marginals, with density
    \begin{align*}
    g_6(x_1,x_2;\alpha,\beta)&=\frac{(\alpha+1)\varphi(x_1)\varphi(x_2)}{\alpha\Phi(x_1)\Phi(x_2)}\bigg(\frac{1+\beta}{(\Phi(x_1))^{-\frac1\alpha}+(\Phi(x_2))^{-\frac1\alpha}-1)^{\alpha+2}}\\&+\frac{4\beta}{2(\Phi(x_1))^{-\frac1\alpha}+2(\Phi(x_2))^{-\frac1\alpha}-3)^{\alpha+2}}\\&-\frac{2\beta}{2(\Phi(x_1))^{-\frac1\alpha}+(\Phi(x_2))^{-\frac1\alpha}-2)^{\alpha+2}}\\&-\frac{2\beta}{(\Phi(x_1))^{-\frac1\alpha}+2(\Phi(x_2))^{-\frac1\alpha}-2)^{\alpha+2}}\bigg),\; \alpha>0, \beta \in [-1,1],
    \end{align*}
    where $\Phi(x)$ and $\varphi(x)$ are the standard normal distributions function and density;
    
    \medskip
    
    \item a Morgenstern distribution Morg($\alpha$), , with standard normal marginals, with density
    \begin{align*}
        g_7(x_1,x_2;\alpha)=\varphi(x_1)\varphi(x_2)\Big(1+\alpha\big(2\Phi(x_1)-1\big)\big(2\Phi(x_1)-1\big)\Big), \;\alpha\in[-1,1];
    \end{align*}
        \item a Pearson type VII distribution PearVII($\alpha$) with density
        \begin{align*}
        g_8(x_1,x_2;\alpha)=\frac{\alpha}{2\pi}   \Big(1 + \frac12(x_1^2+x_2^2)\Big)^{\alpha + 1},\;\; \alpha>0. 
    \end{align*}
\end{itemize}

Methods of generating random variates from these distributions are available in \cite{johnson1987} and \cite{cook1986}.
From Table \ref{tab:bBivPow}  we can see that our new test is more powerful for the great majority of alternatives. In some cases, such as Normal Mixture B, and Normal Mixture A for negatively correlated components, the difference in powers is outstanding. On the other hand $KS2$ is consistently slightly better only for the Pearson VII alternative.
Worth mentioning is that the powers of our new test are symmetric with respect to the sign of correlation parameter $\rho$, which doesn't hold in general for the $KS2$ test.

%\section{Concluding remarks}

%\noindent  1. We are not able to provide the explicit form of the  $T_{m,n}$  statistic even for very simple cases, such as  $T_{2,1}$.  The only exception was the  $T_{1,1}$  statistic. An expansion similar to that applied in the previous section leads to unknown integrals. Therefore, in case of higher orders we should determine the  $T_{m,n}$  statistic numerically.

\vspace{0.5cm}
%\noindent  2. The  $T_{1,1}$  statistic can also be applied to test the standard univariate normality. We assume that the distribution of  $\Y$  is known to be normal and independent of  $\X$,  or else we may simply draw  $\Y$  from a normal distribution independently of  $\X$;  then the null hypothesis takes the form:  
%$$\H^{(1,0)}_0:\textrm{the random variable  } \X  \textrm{  has a normal distribution  vs. }  \H^{(1,0)}_1:\textrm{ }\H^{(1,0)}_0 \textrm{ is not true}.$$

\color{black}
\vspace{0.5cm}
%\noindent  3. The  $T_{m,n}$  statistic can be applied to test the multivariate independence and normality. Similarly as above, we can assume that we draw a random vector  $\Y$  independently of  $\X$ from normal distribution.  It is worth emphasizing that the dimension of this vector can be determined arbitrarily. Then the null hypothesis is of the form:
%$\H^{(m,0)}_0$: all components of  $\X$  are independent and have a normal distribution  vs.   $\H^{(m,0)}_1$:  $\H^{(m,0)}_0$ is not true.
%The null hypothesis of this type is not new and was studied in  \cite{Kallenberg}  for two vectors, i.e. testing for bivariate independence and normality.

\begin{table}[httb]
    \centering
       \caption{Powers for testing bivariate normality and independence}
    \begin{tabular}{|cr|cc|cc|cr|cc|cc|}
    \hline & & \multicolumn{2}{|c|}{$n=20$} &
    \multicolumn{2}{|c|}{$n=50$}& & & \multicolumn{2}{|c|}{$n=20$} &
    \multicolumn{2}{|c|}{$n=50$}\\
        \hline Alt. & $\rho$ & KS2 & $M_2$ & KS2 & $M_2$ & Alt. & $\rho$ & KS2 & $M_2$ & KS2 & $M_2$\\
        \hline
       \multirow{4}{*}{\shortstack{BivN\\($0,0,1,1,\rho$)}} & 0 & 5 & 5 & 5 & 5&\multirow{4}{*}{\shortstack{BivN\\($0,0,1,1,\rho$)}} &  &  &  &  & \\
       & 0.1 & 7 & 8 & 9 & 10&& -0.1 & 7 & 8 & 9 & 10\\
       & 0.3 & 24 & 25 & 46 & 50 && -0.3 & 24 & 25 & 46 & 50 \\
       & 0.5 & 60 & 64 & 94 & 95&& -0.5 & 60 & 64 & 94 & 95\\
       \hline
       \multirow{7}{*}{NMixA($\rho$)}& 0 & 64 & 68 & 93 &97 & \multirow{7}{*}{NMixB($\rho$)}& 0 & 6 &  8& 7&13 \\
       & 0.1 & 81 & 90 & 98 & 88& & 0.1 & 13 & 90 & 25 & 87\\
       & 0.3 & 90 & 94 & 100 & 97&& 0.3 & 14 & 94 & 26 & 97\\
       & 0.5 & 97 & 98 & 100 & 100&& 0.5 & 20 & 98 & 38 & 100\\
       & -0.1 & 66 & 90 & 95 & 91&& -0.1 & 12 & 90 & 23 & 88 \\
       & -0.3 & 56 & 93 & 88 & 96&& -0.3 & 14 & 93 & 23 & 96 \\
       & -0.5 & 51 & 98 & 80 & 100&& -0.5 & 21 & 97 & 36 & 100\\
       \hline
       
        \multirow{7}{*}{\shortstack{LogN\\($1,1,\rho$)}}& 0 &59  &73   &94 &100& \multirow{7}{*}{\shortstack{LogN\\($0.05,0.5,\rho$)}}& 0 & 58 &75 &  93&100\\
        & 0.1 & 60 & 84  & 93&100&&0.1 & 60 & 83 &94  &100\\
       & 0.3 & 65 & 87 &95 &100&& 0.3 & 66 & 88 &93 &100\\
       & 0.5 & 83 & 97 &96 &100&& 0.5 & 71 & 94 &96 &100\\
       & -0.1 & 62 & 85 &94 &100&& -0.1 & 59 & 83 &94 &100\\
       & -0.3 & 63 & 85 &96 &100&& -0.3 & 62 & 88 &96 &100\\
       & -0.5 & 70 & 93 &98 &100&& -0.5 & 70 & 93 &98 &100\\
       \hline
               \multirow{7}{*}{\shortstack{Sinh$^{-1}$N\\($0,0,1,1,\rho$)}}& 0 &32  &28   & 67 &60 & \multirow{7}{*}{\shortstack{Sinh$^{-1}$N\\($0,2,1,0.5,\rho$)}}& 0 &33  & 31  &70 &63\\
        & 0.1 & 33 & 35  & 69 & 74& & 0.1 & 32 & 36  &73 &76\\       
       & 0.3 & 43 & 50 & 78 & 80&& 0.3 & 41 & 48 &80 &92\\
       & 0.5 & 62 & 76 & 92&97&& 0.5 & 60 & 75 &95 &99\\
       & -0.1 & 32 & 37 &70 &67&& -0.1 & 33 & 37 & 69&74\\
       & -0.3 & 42 & 53 &79 &81&& -0.3 & 40 & 48 &78 &89\\
       & -0.5 & 62 & 74 &94&98&& -0.5 & 58 & 70 &92 &97\\
\hline\hline

        \hline Alt. & $\alpha$ & KS2 & $M_2$ & KS2 & $M_2$&Alt. & $\alpha$ & KS2 & $M_2$ & KS2 & $M_2$ \\
        \hline
        \multirow{4}{*}{GBPL($\alpha,-1$)}& 1 &  35 &  35 & 52 & 67&\multirow{4}{*}{GBPL($\alpha,1$)}& 1 & 87 & 88  & 100 & 100\\
       & 2 & 10 & 9 & 11 & 12&& 2 & 62 & 66 & 93 & 98\\
       & 5 & 8 & 9 & 9 & 12&& 5 & 43 & 48 & 67 & 82\\
       & 10 & 12 & 15 & 13 & 31&& 10 & 33 & 38 & 52 & 74\\
        \hline
            \multirow{6}{*}{Morg($\alpha$)}& 0.5 & 9 & 11 & 9 & 14& &&&&& \\
       & 0.75 & 15 & 18 & 18 & 33& \multirow{4}{*}{PearVII($\alpha$)}& 1 & 67 & 67 & 94 & 92\\
       & 1 & 22 & 28 & 34 & 59&& 2 & 34 & 30 & 65 & 54\\
       & -0.5 & 8 & 9 & 8 & 16&& 5 & 16 & 12 & 24 & 16\\
       & -0.75 &15 & 18 & 17 & 34&& 10 & 10 & 9 & 11 & 7\\
       & -1 & 24 & 29 & 35 & 57&&&&&&\\
       \hline
           \end{tabular}
 
    \label{tab:bBivPow}
\end{table}

\section*{Acknowledgments}

%The author would like to thank Professor Grzegorz Wy\l upek for several discussions and helpful comments during  the preparation of this paper. 
The work was supported by the Narodowe Centrum Nauki grant No 2018/29/B/HS4/01420 and the Ministry of Education, Science and Technological Development of the Republic of Serbia.

%\clearpage

\end{document}